\newcolumntype{L}{>{$}l<{$}}
\newcommand{\geng}{\textsc{Geng}}
\newcommand{\nauty}{\textsc{Nauty}}
\newcommand{\rigicomp}{\textsc{RigiComp}}
\newcommand{\mathematica}{\textsc{Mathematica}}
\newtheorem{theorem}{Theorem}
\newtheorem{lemma}[theorem]{Lemma}
\theoremstyle{definition}
\colorlet{colbg}{white}
\colorlet{colfg}{black}
\colorlet{colgraphv}{colfg!75!white}
\colorlet{colgraphe}{colfg!55!white}
\colorlet{colG}{DarkSeaGreen}
\definecolor{colR}{HTML}{CC6677}
\definecolor{colO}{HTML}{DDCC77}
\definecolor{colB}{HTML}{6699CC}
\colorlet{colY}{Gold!90!black}
\colorlet{colGray}{white!60!black}
\colorlet{colGold}{Gold!90!black}
\tikzstyle{vertex}=[fill=colgraphv,circle,inner sep=0pt, minimum size=4pt]
\tikzstyle{edge}=[line width=1.5pt,colgraphe]
\tikzstyle{edget}=[line width=1pt,colgraphe]
\tikzstyle{labelsty}=[font=\scriptsize]
\begin{document}

\title{Minimal counterexamples to Hendrickson's conjecture on globally rigid graphs}

\author{%
  Georg Grasegger%
  \thanks{Johann Radon Institute for Computational and Applied
    Mathematics (RICAM), Austrian Academy of Sciences,
    Altenberger Straße 69, 4040 Linz, Austria}
}
\date{}

\maketitle

\begin{abstract}
    In this paper we consider the class of graphs which are redundantly $d$-rigid and $(d+1)$-connected but not globally $d$-rigid, where $d$ is the dimension.
    This class arises from counterexamples to a conjecture by Bruce Hendrickson.
    It seems that there are relatively few graphs in this class for a given number of vertices.
    Using computations we show that $K_{5,5}$ is indeed the smallest counterexample to the conjecture.
\end{abstract}

\begin{abstract}
    In this paper we consider the class of graphs which are redundantly $d$-rigid and $(d+1)$-connected but not globally $d$-rigid, where $d$ is the dimension.
    This class arises from counterexamples to a conjecture by Bruce Hendrickson.
    It seems that there are relatively few graphs in this class for a given number of vertices.
    Using computations we show that $K_{5,5}$ is indeed the smallest counterexample to the conjecture.
\end{abstract}

\section{Introduction}
The basis of the class of graph we consider is the following theorem from Hendrickson.
\begin{theorem}[Hendrickson \cite{Hendrickson}]
 A globally $d$-rigid graph with at least $d+2$ vertices is $(d+1)$-connected and redundantly $d$-rigid.
\end{theorem}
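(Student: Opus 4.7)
My plan is to fix a generic realization $p \colon V(G) \to \R^d$ that witnesses the global $d$-rigidity of $G$ and derive a contradiction in each case by exhibiting a second realization of $G$ with the same edge lengths as $p$ but not congruent to $p$.

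\emph{Connectivity.} Suppose toward a contradiction that there is a vertex cut $S$ with $|S| \le d$, so $G - S$ decomposes into nonempty parts on vertex sets $V_1$ and $V_2$. The image $p(S)$ consists of at most $d$ points and hence lies in some hyperplane $H \subset \R^d$. Define $p'$ to agree with $p$ on $S \cup V_2$ and to equal the reflection of $p$ through $H$ on $V_1$. All edge lengths are preserved: edges inside $S \cup V_2$ and inside $V_1$ are clearly unchanged, edges between $S$ and $V_1$ have one endpoint on $H$, and there are no edges between $V_1$ and $V_2$ since $S$ separates. Now any congruence $\phi$ with $p' = \phi \circ p$ would fix $p(S \cup V_2)$ pointwise; for generic $p$ the only possibilities are $\phi = \mathrm{id}$ or the reflection through $H$, and these would force $p(V_1) \subset H$ or $p(V_2) \subset H$ respectively, both of which fail generically. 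This contradicts global $d$-rigidity.

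\emph{Redundant rigidity.} Suppose $G - e$ fails to be $d$-rigid for some edge $e = uv$. Then at the generic realization $p$ there is a non-trivial analytic flex $p_t$ of $G - e$ with $p_0 = p$. Set $\ell(t) := \|p_t(u) - p_t(v)\|^2$. If $\ell$ is locally constant, then $p_t$ preserves $e$ as well, giving a non-trivial flex of $G$; but global $d$-rigidity implies ordinary $d$-rigidity when $n \ge d+2$, a contradiction. Otherwise $\ell$ is non-constant, and I would then work in the realization variety $X$ of $G - e$ modulo Euclidean congruence, a compact semi-algebraic set on which $\ell$ descends to a continuous (indeed algebraic) function. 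The connected component $C$ of $[p]$ in $X$ is positive-dimensional, and for generic $p$ the value $\ell([p])$ is a regular value strictly between the minimum and maximum of $\ell$ on $C$. An intermediate-value/level-set argument on the compact connected $C$ then produces a second point $[q] \in C$ with $\ell([q]) = \ell([p])$. Then $q$ realizes $G$ with the same edge-length data as $p$ but is not congruent to $p$, again contradicting global rigidity.

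\emph{Main obstacle.} The two geometric ideas---reflecting a component through a flat, and exploiting the analytic behaviour of $\ell$ along a flex---are themselves short. The real work lies in the genericity arguments. In the connectivity part, one has to rule out accidental congruences between $p$ and $p'$ even when the fixed side of the reflection is too small to pin down an isometry on its own. In the redundant-rigidity part, the crucial point is producing a second preimage of the original edge length; this requires the compactness of the realization variety, the algebraic nature of $\ell$, and a careful genericity argument ensuring the generic realization lies at a regular, non-extremal value. I expect these genericity steps to be where most of the care is needed.
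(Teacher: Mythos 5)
The paper does not prove this statement at all---it is quoted verbatim from Hendrickson's 1992 paper and used as a black box---so there is no in-paper proof to compare against. Your outline is essentially Hendrickson's original argument (reflection through a flat spanned by a small cutset; compactness of the configuration space of $G-e$ plus an intermediate-value argument on the length of $e$), so the approach is the right one. But as written both halves lean on claims that are either false as stated or carry the entire weight of the proof without justification.

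In the connectivity part, the claim that a congruence fixing $p(S\cup V_2)$ pointwise must be the identity or the reflection through $H$ fails when $|S\cup V_2|\le d$: the isometries fixing fewer than $d+1$ generic points form a positive-dimensional group, so you have not ruled out an accidental congruence. You flag this yourself, but the repair is easy and you should make it: pick $u\in V_1$ and $w\in V_2$; they are non-adjacent since $S$ separates them, and $\|r_H(p(u))-p(w)\|\ne\|p(u)-p(w)\|$ exactly because $p(w)\notin H$, which holds generically. With the definition of global rigidity used in this paper (equal edge lengths force equal distances between non-adjacent pairs), this single changed distance already finishes the argument, with no congruence classification needed.

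In the redundant-rigidity part, the assertion that ``for generic $p$ the value $\ell([p])$ is a regular value strictly between the minimum and maximum of $\ell$ on $C$'' is not a genericity formality---it is the theorem's engine, and your sketch does not identify the mechanism that proves it. The point is that since $G$ is $d$-rigid (which global rigidity gives you) while $G-e$ is not, the row of $e$ is not in the span of the rows of $E\setminus\{e\}$ in the rigidity matrix at a generic $p$; by a Lagrange-multiplier argument this is precisely the statement that $p$ is not a critical point of $\ell$ restricted to the fiber $M$, hence not a local extremum, hence $\ell(p)$ lies strictly between $\min_C\ell$ and $\max_C\ell$. Without this step nothing prevents $\ell$ from attaining its extremum at $[p]$, and the intermediate-value argument produces no second preimage. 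Two further details you should not wave at: compactness of $M$ requires $G-e$ to be connected (which follows from the connectivity half already proved), and the smooth-manifold structure of the generic fiber modulo congruence requires its own constant-rank argument. Once $\ell(p)$ is known to be non-extremal, your level-set step does go through ($C\setminus\{[p]\}$ is connected unless $C$ is a circle, and in that case it is an arc containing both extremizers), yielding a non-congruent realization of $G$ with the same edge lengths and hence the desired contradiction.
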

Hendrickson conjectured that the reverse direction of the theorem also holds.
While this is true for $d\in\{1,2\}$ \cite{JacksonJordan}, it was shown to be false in any higher dimension \cite{Connelly} using some bipartite graphs.
Since then further counterexamples were found \cite{FrankJiang,JordanKiralyTanigawa}.
Still very little is known on the set of graphs which are $(d+1)$-connected and redundantly $d$-rigid but not globally rigid.
In this paper we computationally show that the well-known $K_{5,5}$ is indeed the smallest counterexample in dimension three.

Before starting the proof, we recall some definitions and results from rigidity theory.
A framework $(G,\rho)$ consisting of a graph $G$ and a placement $\rho$ of the vertices in $d$-dimensional space is \emph{$d$-rigid} if edge-length preserving continuous motions can only arise from isometries in that space.
A graph is $d$-rigid, if the framework with any generic placement is $d$-rigid.
\emph{Redundant $d$-rigidity} is obtained if the graph is $d$-rigid and remains so after removal of any of its edges.
A graph is \emph{globally $d$-rigid}, if all frameworks which yield the same edge-lengths, indeed also yield the same distances between all pairs of non-adjacent vertices.
A graph that is redundantly $d$-rigid and $(d+1)$-connected but not globally $d$-rigid, i.\,e.\ a counterexample to Hendrickson's conjecture, we call an \emph{$H_d$-graph} in this paper.

If the number of vertices is comparably low with respect to the dimension then Hendrickson's conjecture has been proven to hold.
\begin{theorem}[{Jordán~\cite[Thm.~3.2]{JordanHigherDim}}]
 A graph $G=(V,E)$ with $d+2 \leq |V| \leq d+4$ is globally $d$-rigid if and only if it is $(d+1)$-connected and redundantly $d$-rigid.
\end{theorem}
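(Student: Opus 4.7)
The ``only if'' direction is Hendrickson's theorem recalled above. For the converse, I would proceed by case analysis on $n := |V| \in \{d+2, d+3, d+4\}$. In each case the $(d+1)$-connectivity hypothesis forces minimum degree at least $d+1$, so every vertex has at most $n-d-2 \in \{0,1,2\}$ non-neighbours. Combined with the redundant-rigidity edge bound $|E(G)| \geq dn - \binom{d+1}{2} + 1$, this restricts $G$ to a short finite list of candidates whose cardinality is independent of $d$.

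For $n = d+2$ the only possibility is $G = K_{d+2}$, which is a simplex and therefore globally $d$-rigid. For $n = d+3$ the complement of $G$ is a matching $M$, and the above inequality forces $|M| \leq 2$. When $|M| \leq 1$ I would delete a vertex not touched by $M$, reducing the problem to $K_{d+2}$, and then finish by the vertex-addition operation (attaching a new vertex to a globally $d$-rigid graph by at least $d+1$ edges preserves global rigidity). When $|M| = 2$, peeling no longer lands on a globally rigid subgraph, and I would instead verify Connelly's sufficient criterion directly: the edge count pins the equilibrium-stress space to dimension one, and an explicit calculation on a generic realisation shows that the unique stress attains the maximum possible rank $n-d-1$.

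For $n = d+4$ the complement has maximum degree at most $2$ and, by the same inequality, at most five edges, so it is one of a small list of disjoint unions of paths and short cycles. For each such complement type I would try to identify a vertex of degree exactly $d+1$ whose removal leaves a $(d+1)$-connected redundantly $d$-rigid graph on $d+3$ vertices; global rigidity of $G$ then follows from the $n=d+3$ case together with vertex-addition. The main technical obstacle is the handful of configurations (in particular those that are nearly vertex-transitive, so that no ``peelable'' vertex exists) in which this reduction is unavailable; for these one has to exhibit an equilibrium stress of rank $n-d-1$ on a generic framework by hand, exploiting the symmetry of the complement. Once this is done, Connelly's sufficient condition closes the remaining cases and completes the proof.
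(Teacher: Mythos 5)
This statement is not proved in the paper at all: it is quoted verbatim from Jordán \cite{JordanHigherDim} as a known result on which the paper builds, so there is no in-paper proof to compare against. Judged on its own terms, your outline gets the easy reductions right --- the complement-size bounds ($|M|\leq 2$ for $n=d+3$, at most five complement edges of maximum degree two for $n=d+4$) are correct --- but it contains one slip and one substantive gap. The slip: when the complement is a single non-edge $uv$ you must delete a vertex \emph{touched} by it, not one untouched by it; deleting an untouched vertex leaves $K_{d+2}$ minus an edge, which has exactly $d(d+2)-\binom{d+1}{2}$ edges, is only minimally $d$-rigid, hence not globally $d$-rigid, so the vertex-addition lemma has no globally rigid base to act on.

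The substantive gap is exactly where you write ``an explicit calculation shows'': $K_{d+3}$ minus two disjoint edges, and the non-peelable $(d+4)$-vertex cases. The latter are not a ``handful'': if the complement has five edges, removing one vertex deletes at most two of them while at most two may survive (and must form a matching), so your peeling step can \emph{never} succeed there, and several four-edge complements (e.g.\ $C_4$, or two disjoint two-edge paths) fail as well. Each exceptional configuration is an infinite family indexed by $d$, and producing a generic equilibrium stress of rank $n-d-1$ uniformly in $d$ is precisely the hard content of the theorem; asserting that the computation works is not a proof. A cleaner device --- and the one matching the toolkit this paper itself relies on (its unconing lemma together with the coning theorem of \cite{ConingG}) --- is to note that any vertex not met by the complement is a cone vertex, and global rigidity transfers in both directions under coning. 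Since the complement spans at most ten vertices, such a vertex exists for all $d\geq 7$, so repeated unconing reduces every exceptional family to a bounded list of graphs in bounded dimension; for instance $K_{d+3}$ minus two disjoint edges is an iterated cone over $C_4$, which is globally $1$-rigid, settling that family for every $d$ with no stress computation at all.
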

In this paper we extend the theorem to $|V|\leq d+6$.

\section{Computations}
In this section we describe what computations we were able to do and what software we used for that.

Minimally rigid graphs with $n$ vertices in dimension $d$ do have $d n-\binom{d+1}{2}$ edges. We use \geng\ (a part of \nauty\ \cite{Nauty,NautyPaper}) to generate such graphs.
With a plugin by Martin Larsson \cite{Larsson} this process can be sped up using sparsity properties of rigid graphs, i.\,e.\ edge counts on subgraphs.
However, not all of these graphs are indeed $d$-rigid.
We then use \rigicomp\ \cite{RigiComp}, a \mathematica\ package, to check for rigidity.
Hence, we obtain lists of all minimally rigid graphs with a certain number of vertices for dimension $d\leq 23$.
We then can construct all $d$-rigid graphs by adding edges or we use \geng\ instead for getting graphs with at least $d n-\binom{d+1}{2}$ edges and check rigidity for all of them.
From those we can select the $(d+1)$-connected and redundantly $d$-rigid ones (see \Cref{tab:reddp1}).
Since redundantly rigid graphs have at least $d n-\binom{d+1}{2}+1$ edges, we can ignore all minimally rigid ones. The set of redundantly $d$-rigid graphs is made available at \cite{DataRed}.
\begin{table}[ht]
    \centering\scriptsize
    \begin{tabular}{Lr*{12}{r}}
        \toprule
        |V|   & $d=3$                            & $d=4$                            & $d=5$                            & $d=6$                            & $d=7$                            & $d=8$                            \\\midrule
        d+2   & \pgfmathprintnumber{1}           & \pgfmathprintnumber{1}           & \pgfmathprintnumber{1}           & \pgfmathprintnumber{1}           & \pgfmathprintnumber{1}           & \pgfmathprintnumber{1}           \\
        d+3   & \pgfmathprintnumber{3}           & \pgfmathprintnumber{3}           & \pgfmathprintnumber{3}           & \pgfmathprintnumber{3}           & \pgfmathprintnumber{3}           & \pgfmathprintnumber{3}           \\
        d+4   & \pgfmathprintnumber{19}          & \pgfmathprintnumber{22}          & \pgfmathprintnumber{23}          & \pgfmathprintnumber{24}          & \pgfmathprintnumber{24}          & \pgfmathprintnumber{24}          \\
        d+5   & \pgfmathprintnumber{304}         & \pgfmathprintnumber{531}         & \pgfmathprintnumber{756}         & \pgfmathprintnumber{900}         & \pgfmathprintnumber{984}         & \pgfmathprintnumber{1021}        \\
        d+6   & \pgfmathprintnumber{12055}       & \pgfmathprintnumber{47777}       & \pgfmathprintnumber{128855}      & \pgfmathprintnumber{246302}      & \pgfmathprintnumber{362922}      & \pgfmathprintnumber{448587}      \\
        d+7   & \pgfmathprintnumber{1079143}     & \pgfmathprintnumber{11623750}    & \pgfmathprintnumber{78033041}    &                                  &                                  &                                  \\
        \bottomrule
    \end{tabular}
    \begin{tabular}{Lr*{12}{r}}
        \toprule
        |V|   & $d=9$                          & $d=10$                         & $d=11$                         & $d=12$                         & $d=13$                         & $d=14$                         & $d=15$                         & $d=16$                         \\\midrule
        d+2   & \pgfmathprintnumber{1}         & \pgfmathprintnumber{1}         & \pgfmathprintnumber{1}         & \pgfmathprintnumber{1}         & \pgfmathprintnumber{1}         & \pgfmathprintnumber{1}         & \pgfmathprintnumber{1}         & \pgfmathprintnumber{1}         \\
        d+3   & \pgfmathprintnumber{3}         & \pgfmathprintnumber{3}         & \pgfmathprintnumber{3}         & \pgfmathprintnumber{3}         & \pgfmathprintnumber{3}         & \pgfmathprintnumber{3}         & \pgfmathprintnumber{3}         & \pgfmathprintnumber{3}         \\
        d+4   & \pgfmathprintnumber{24}        & \pgfmathprintnumber{24}        & \pgfmathprintnumber{24}        & \pgfmathprintnumber{24}        & \pgfmathprintnumber{24}        & \pgfmathprintnumber{24}        & \pgfmathprintnumber{24}        & \pgfmathprintnumber{24}        \\
        d+5   & \pgfmathprintnumber{1040}      & \pgfmathprintnumber{1047}      & \pgfmathprintnumber{1051}      & \pgfmathprintnumber{1052}      & \pgfmathprintnumber{1053}      & \pgfmathprintnumber{1053}      & \pgfmathprintnumber{1053}      & \pgfmathprintnumber{1053}      \\
        d+6   & \pgfmathprintnumber{498232}    & \pgfmathprintnumber{522517}    & \pgfmathprintnumber{533092}    & \pgfmathprintnumber{537457}    & \pgfmathprintnumber{539203}    & \pgfmathprintnumber{539912}    & \pgfmathprintnumber{540194}    & \pgfmathprintnumber{540313}    \\
        \bottomrule
    \end{tabular}
    \begin{tabular}{Lr*{12}{r}}
        \toprule
        |V|   & $d=17$                         & $d=18$                         & $d=19$                         & $d=20$                         & $d=21$                         & $d=22$                         & $d=23$                         \\\midrule
        d+2   & \pgfmathprintnumber{1}         & \pgfmathprintnumber{1}         & \pgfmathprintnumber{1}         & \pgfmathprintnumber{1}         & \pgfmathprintnumber{1}         & \pgfmathprintnumber{1}         & \pgfmathprintnumber{1}         \\
        d+3   & \pgfmathprintnumber{3}         & \pgfmathprintnumber{3}         & \pgfmathprintnumber{3}         & \pgfmathprintnumber{3}         & \pgfmathprintnumber{3}         & \pgfmathprintnumber{3}         & \pgfmathprintnumber{3}         \\
        d+4   & \pgfmathprintnumber{24}        & \pgfmathprintnumber{24}        & \pgfmathprintnumber{24}        & \pgfmathprintnumber{24}        & \pgfmathprintnumber{24}        & \pgfmathprintnumber{24}        & \pgfmathprintnumber{24}        \\
        d+5   & \pgfmathprintnumber{1053}      & \pgfmathprintnumber{1053}      & \pgfmathprintnumber{1053}      & \pgfmathprintnumber{1053}      & \pgfmathprintnumber{1053}      & \pgfmathprintnumber{1053}      & \pgfmathprintnumber{1053}      \\
        d+6   & \pgfmathprintnumber{540360}    & \pgfmathprintnumber{540381}    & \pgfmathprintnumber{540389}    & \pgfmathprintnumber{540393}    & \pgfmathprintnumber{540394}    & \pgfmathprintnumber{540395}    & \pgfmathprintnumber{540395}    \\
        \bottomrule
    \end{tabular}
    \caption{Number of redundantly $d$-rigid and $(d+1)$-connected graphs for different dimensions.}
    \label{tab:reddp1}
\end{table}

We then compute, again with \rigicomp, the globally $d$-rigid ones from these lists (see \Cref{tab:glob}). The list of globally rigid graphs is available at \cite{DataGlob}.
\begin{table}[ht]
    \centering\scriptsize
    \begin{tabular}{Lr*{12}{r}}
        \toprule
        |V|   & $d=3$                            & $d=4$                            & $d=5$                            & $d=6$                            & $d=7$                            & $d=8$                            \\\midrule
        d+2   & \pgfmathprintnumber{1}           & \pgfmathprintnumber{1}           & \pgfmathprintnumber{1}           & \pgfmathprintnumber{1}           & \pgfmathprintnumber{1}           & \pgfmathprintnumber{1}           \\
        d+3   & \pgfmathprintnumber{3}           & \pgfmathprintnumber{3}           & \pgfmathprintnumber{3}           & \pgfmathprintnumber{3}           & \pgfmathprintnumber{3}           & \pgfmathprintnumber{3}           \\
        d+4   & \pgfmathprintnumber{19}          & \pgfmathprintnumber{22}          & \pgfmathprintnumber{23}          & \pgfmathprintnumber{24}          & \pgfmathprintnumber{24}          & \pgfmathprintnumber{24}          \\
        d+5   & \pgfmathprintnumber{304}         & \pgfmathprintnumber{531}         & \pgfmathprintnumber{756}         & \pgfmathprintnumber{900}         & \pgfmathprintnumber{984}         & \pgfmathprintnumber{1021}        \\
        d+6   & \pgfmathprintnumber{12055}       & \pgfmathprintnumber{47777}       & \pgfmathprintnumber{128855}      & \pgfmathprintnumber{246302}      & \pgfmathprintnumber{362922}      & \pgfmathprintnumber{448587}      \\
        d+7   & \pgfmathprintnumber{1079142}     & \pgfmathprintnumber{11623749}    & \pgfmathprintnumber{78033040}    &                                  &                                  &                                  \\
        \bottomrule
    \end{tabular}
    \caption{Number of globally $d$-rigid graphs for different dimensions. For $|V|\leq d+6$ this is the same as \Cref{tab:reddp1}.}
    \label{tab:glob}
\end{table}
Since checking redundant rigidity seems to be the slowest computation we may also first compute rigid, then the non-globally rigid ones.
From those we check connectivity and only in the end we compute redundancy.

Note, that \rigicomp\ provides both a probabilistic relatively fast computation which might give false negatives and a symbolic implementation (both based on \cite{GortlerHealyThurston}).
For $|V|\leq d+5$ we can use the symbolic approach. For more vertices we used the probabilistic method.
Nevertheless, for $|V|\leq d+6$ we get an exact answer due to the following.

Let us first assume we want to have all minimally rigid graphs.
Every minimally rigid graph is tight which means it has $d|V|-\binom{d+1}{2}$ edges and every induced subgraph with $|V'|\geq d$ vertices has at most $d|V'|-\binom{d+1}{2}$ edges.
These tight graphs are those that we get from initial computations from \geng\ with the plugin \cite{Larsson}.
Not all of them, however, are rigid. The remaining ones are either flexible circuits or 0-extensions thereof or contain a non-tight circuit as a spanning subgraph.
In \cite{FlexibleCircuits} flexible circuits with at most $d+6$ vertices have been fully classified.
Hence, we have a deterministic answer for minimal rigidity in that case.
Computationally we may first use the probabilistic procedure and in case of a negative answer do check via flexible circuits.
We can do similarly when checking $d$-rigidity (i.\,e.\ containing a spanning minimally rigid graph) and redundant rigidity by tracing back to minimal rigidity.
Hence, for $|V|\leq d+6$ the results are deterministic.
For $d=3$ also the computation for $|V|=d+7$ was done symbolically.

The computations show that all redundantly $d$-rigid graphs that are also $(d+1)$-connected are indeed globally $d$-rigid if $|V|\leq d+6$.
For $|V|=d+7$ we see that $K_{5,5}$ and successive cones give the only $H_d$-graphs at least until $d=5$.
The \emph{cone} of a graph is obtained by adding a new vertex and edges from it to all the existing vertices (compare \cite{ConingR,ConingG}).
This construction we also call \emph{coning}.

\section{Main Results}
From the computations and some edge counting we can state the following main theorem.
\begin{theorem}\label{thm:main}
 A graph $G=(V,E)$ with $d+2 \leq |V| \leq d+6$ is globally $d$-rigid if and only if it is $(d+1)$-connected and redundantly $d$-rigid.
\end{theorem}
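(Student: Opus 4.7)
The forward direction is Hendrickson's theorem, and for the converse the range $d+2 \le |V| \le d+4$ is already Jordán's theorem. So I need only settle $|V| = d+5$ and $|V| = d+6$, and to do so for \emph{every} $d$. Since the enumerations of \Cref{tab:reddp1,tab:glob} only reach finite $d$, my plan is to handle all large $d$ by a theoretical argument and appeal to the tables for the remaining finitely many values.

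The theoretical input is a short edge count. A redundantly $d$-rigid graph on $n = d+k$ vertices has at least $dn - \binom{d+1}{2} + 1$ edges, so its complement has at most $\binom{n}{2} - dn + \binom{d+1}{2} - 1 = \binom{k}{2} - 1$ edges. If every vertex of the complement had positive degree, the complement would contain at least $\lceil n/2 \rceil$ edges, which rearranges to $d \le k^2 - 2k - 2$. For $k = 5$ this gives the threshold $d \le 13$ and for $k = 6$ it gives $d \le 22$; above these thresholds $G$ must carry a universal vertex $v$.

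Given such a $v$, I reduce to dimension $d-1$ by coning. The graph $H = G - v$ has $(d-1) + k$ vertices. It is $d$-connected, since any $(d-1)$-vertex cut of $H$ would together with $v$ form a $d$-vertex cut of $G$ and contradict $(d+1)$-connectivity. It is redundantly $(d-1)$-rigid, since for every $e \in E(H)$ the graph $G - e$ is a cone over $H - e$, and Whiteley's coning theorem then turns the $d$-rigidity of $G - e$ into the $(d-1)$-rigidity of $H - e$. Induction on $d$ therefore yields global $(d-1)$-rigidity of $H$, and the global-rigidity coning theorem of Connelly--Whiteley lifts this back to global $d$-rigidity of $G$.

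The inductive base consists precisely of the ranges $d \le 13$ (for $k=5$) and $d \le 22$ (for $k=6$), at which \Cref{tab:reddp1,tab:glob} already agree by the deterministic computation described in the previous section. The main conceptual step, and therefore the main obstacle to a clean write-up, is the coning transfer: one has to be careful that redundant rigidity really does descend through a universal vertex (only non-apex edges need be checked, and the apex edges are irrelevant to redundancy of $H$). Once this is in hand, the edge count and the tables finish the proof.
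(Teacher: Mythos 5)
Your proposal is correct and follows essentially the same route as the paper: an edge count forcing a universal vertex for large $d$ (your complement-counting gives the same threshold $d \le k^2-2k-2$, hence $d\le 22$ for $k=6$, as the paper's \Cref{lem:stat}), followed by deleting that vertex to descend a dimension via coning and appealing to the computations as the base case. The only cosmetic differences are that you run the induction directly (concluding global rigidity of $G-v$ and lifting it back up by Connelly--Whiteley) where the paper argues contrapositively via $H_{d-1}$-graphs, and that you spell out the descent of redundant rigidity through the apex, which the paper leaves as ``easy to see.''
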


For the proof we use two basic statements.
\begin{lemma}\label{lem:stat}
 Let $G=(V,E)$ be a redundantly $d$-rigid graph with $d\geq23$, $n=|V|=d+k$ and $k\leq 6$.
 Then $G$ has a vertex of degree $n-1$.
\end{lemma}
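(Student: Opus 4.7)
The plan is to derive the existence of a vertex of degree $n-1$ by a pure edge-counting argument, exploiting the tension between the lower bound on edges coming from redundant $d$-rigidity and the upper bound that would follow from the negation of the conclusion.

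First I would recall that a $d$-rigid graph on $n \geq d+1$ vertices has at least $dn - \binom{d+1}{2}$ edges (the Maxwell bound, since it contains a spanning minimally rigid subgraph). Redundant $d$-rigidity then forces at least one additional edge, giving
\[
    |E| \;\geq\; dn - \binom{d+1}{2} + 1.
\]
Next I would assume for contradiction that every vertex of $G$ has degree at most $n-2$. Summing degrees yields the upper bound
\[
    |E| \;\leq\; \frac{n(n-2)}{2}.
\]

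Combining the two inequalities and substituting $n = d+k$, the plan is to simplify to an inequality purely in $d$ and $k$. A direct expansion gives $d^2 + 2dk - d + 2 \leq d^2 + 2dk - 2d + k^2 - 2k$, which rearranges to
\[
    d \;\leq\; k^2 - 2k - 2.
\]
The final step is to observe that the right-hand side is maximized over $k \in \{2,\ldots,6\}$ at $k=6$, yielding the bound $d \leq 22$. This contradicts the hypothesis $d \geq 23$, so some vertex must have degree $n-1$.

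There is no real obstacle here: the argument is a clean two-line count, and the threshold $d \geq 23$ in the statement is tuned precisely so that the inequality $d \leq k^2 - 2k - 2$ fails for every admissible $k \leq 6$. The only points requiring care are invoking the correct Maxwell-type edge bound for redundantly $d$-rigid graphs (which needs $n \geq d+2$, consistent with the setting of \Cref{thm:main}) and verifying that $k^2 - 2k - 2 \leq 22$ throughout the range $k \leq 6$, which is immediate.
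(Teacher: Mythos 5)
Your proposal is correct and follows essentially the same argument as the paper: a degree-sum upper bound of $n(n-2)$ under the negation, compared against the lower bound $2\bigl(dn-\tbinom{d+1}{2}+1\bigr)$ from redundant $d$-rigidity, simplifying to the same threshold $d\leq k^2-2k-2\leq 22$. The only cosmetic difference is that you isolate $d$ explicitly while the paper keeps the combined expression $k^2-2k-d-2\ell$.
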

\begin{proof}
 Since $G$ is redundantly $d$-rigid it has $dn-\binom{d+1}{2}+\ell$ edges for some $\ell\geq 1$.
 We assume for contradiction that all vertices have degree less than $n-1$.
 Then
 \begin{align*}
  0&= \sum_{v\in V} \deg(v) -2|E|\\
   &\leq n(n-2)- 2(dn - \binom{d+1}{2}+\ell)\\
   &= (d+k)(d+k-2) -2d(d+k) +d(d+1) -2\ell\\
   &= (d+k)(k-d-2) +d(d+1)-2\ell = k^2-d^2-2d-2k +d^2 +d -2\ell \\
   &= k^2-2k-d-2\ell
   \leq 24-d-2\ell
 \end{align*}
 But this cannot be true for $d\geq 23$ and therefore $G$ has a vertex of degree $n-1$.
\end{proof}

Let $G$ be a graph and let $G'$ be constructed from $G$ by coning.
Coning preserves several properties in rigidity while increasing the dimension.
Rigidity is proven to be preserved by \cite{ConingR}. This means that a graph is $d$-rigid if and only if the cone is $(d+1)$-rigid.
The same is true for global rigidity due to \cite{ConingG}.
If the cone $G'$ is redundantly $(d+1)$-rigid, then it is easy to see that $G$ is redundantly $d$-rigid.
From this we get the following Lemma.

\begin{lemma}\label{lem:uncone}
 Let $d\geq 23$ and let $G=(V,E)$ be a redundantly $d$-rigid and $(d+1)$-connected graph with $n=|V|=d+k$ and $k\leq 6$.
 If $G$ is not globally $d$-rigid then there exists an $H_{d-1}$-graph with $n-1$ vertices.
\end{lemma}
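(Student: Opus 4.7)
The plan is to exhibit the desired $H_{d-1}$-graph by \emph{un-coning} $G$ at a universal vertex. By \Cref{lem:stat}, since $G$ is redundantly $d$-rigid with $d\geq 23$ and $n=d+k$ for $k\leq 6$, there exists a vertex $v\in V$ with $\deg(v)=n-1$. Such a vertex is adjacent to every other vertex, so $G$ is the cone (with apex $v$) of the induced subgraph $G':=G-v$. Note $|V(G')|=n-1=(d-1)+k$ with $k\leq 6$, so the vertex count is in the right range for $G'$ to be a candidate $H_{d-1}$-graph. The task is then to check the three defining properties of an $H_{d-1}$-graph for $G'$.

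I would verify the three conditions in turn, each by invoking a coning or connectivity fact already recalled in the paper. \textbf{Redundant $(d-1)$-rigidity of $G'$:} the paragraph preceding the lemma observes that if the cone of a graph is redundantly $(d+1)$-rigid, then the original graph is redundantly $d$-rigid. Applying this statement with $G$ in the role of the cone and $G'$ in the role of the base graph (shifting $d$ down by one) gives redundant $(d-1)$-rigidity of $G'$ directly from the hypothesis on $G$. \textbf{$d$-connectedness of $G'$:} since $G$ is $(d+1)$-connected, deleting a single vertex yields a graph that is at least $d$-connected; hence $G'=G-v$ is $d$-connected, i.e.\ $((d-1)+1)$-connected. \textbf{Failure of global $(d-1)$-rigidity of $G'$:} by the coning theorem for global rigidity \cite{ConingG}, $G$ is globally $d$-rigid if and only if $G'$ is globally $(d-1)$-rigid; the assumption that $G$ is not globally $d$-rigid therefore forces $G'$ to fail global $(d-1)$-rigidity.

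Combining the three bullets, $G'$ is a redundantly $(d-1)$-rigid, $d$-connected, not globally $(d-1)$-rigid graph on $n-1$ vertices, i.e.\ an $H_{d-1}$-graph with $n-1$ vertices, as required.

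The only step that could conceivably be subtle is the first: one needs to confirm that the coning implication really runs in the direction used here, i.e.\ that redundant rigidity of the cone descends to redundant rigidity of the base (not just the other way around). This is exactly what the paragraph immediately preceding the lemma asserts, so no new work is needed. Everything else is a one-line consequence of the results already collected (\Cref{lem:stat}, elementary connectivity, and the coning theorems of \cite{ConingR,ConingG}); I expect no genuine obstacle.
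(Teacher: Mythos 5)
Your proposal is correct and follows the same route as the paper: invoke \Cref{lem:stat} to find a vertex of degree $n-1$, delete it, and observe that the three defining properties of an $H_{d-1}$-graph descend to the resulting graph via the coning facts and elementary connectivity. The paper states this in one sentence; you have merely spelled out the three verifications explicitly.
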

\begin{proof}
 By \Cref{lem:stat} we know that $G$ has a vertex of degree $n-1$.
 Hence, removing this vertex gives a graph $G'$ that is redundantly $(d-1)$-rigid by and $d$-connected but not globally $(d-1)$-rigid on $n-1$ vertices.
\end{proof}

Together with the computational results we can finally proof the main theorem.
\begin{proof}[Proof of \Cref{thm:main}]
 Let $G$ be a redundantly $d$-rigid and $(d+1)$-connected graph with $|V|\leq d+6$.
 If $d\leq 22$, then we have seen from the computations that $G$ is globally $d$-rigid.
 Assuming $d\geq 23$ we know from \Cref{lem:uncone} that if $G$ was not globally $d$-rigid then the removal of a vertex with degree $n-1$, which does exist, gives a non-globally $(d-1)$-rigid graph that is $d$-connected and redundantly $(d-1)$-rigid graph.
 If $d-1=22$ we know from the computations that such a graph does not exist.
 If $d-1\geq23$ we continue inductively.
 This proves that $G$ is globally $d$-rigid.
\end{proof}

\section{Examples}
It is known from \cite{Connelly} that $K_{5,5}$ is an $H_3$-graph. By successive coning we get $H_d$-graphs for all $d\geq3$ by \cite{ConingG}.
More generally, it is known from \cite{Connelly} that $K_{m,n}$ is an $H_d$-graph if $m+n=\binom{d+2}{2}$ and $m,n\geq d+2$.
This means that there are two $H_4$-graphs $K_{6,9}$ and $K_{7,8}$ with $15$ vertices each, which is out of the computational scope of this paper.

For $d=3$ a construction of six globally $3$-rigid graphs, glued in a cycle (see for example \Cref{fig:H3:K5cyc}) gives an $H_3$-graph due to \cite{JordanKiralyTanigawa}.
In \Cref{fig:H3:K55cyc} we show an example of such a gluing with $K_{5,5}$, i.\,e., a non-globally $3$-rigid base graph, which also gives an $H_3$-graph as shown by computations.
Both these graphs are not minimal in terms of edge counts with respect to the $H_3$ property.
Indeed we can find five edges (indicated in red in the figures) that we can remove such that the result is still an $H_3$-graph.
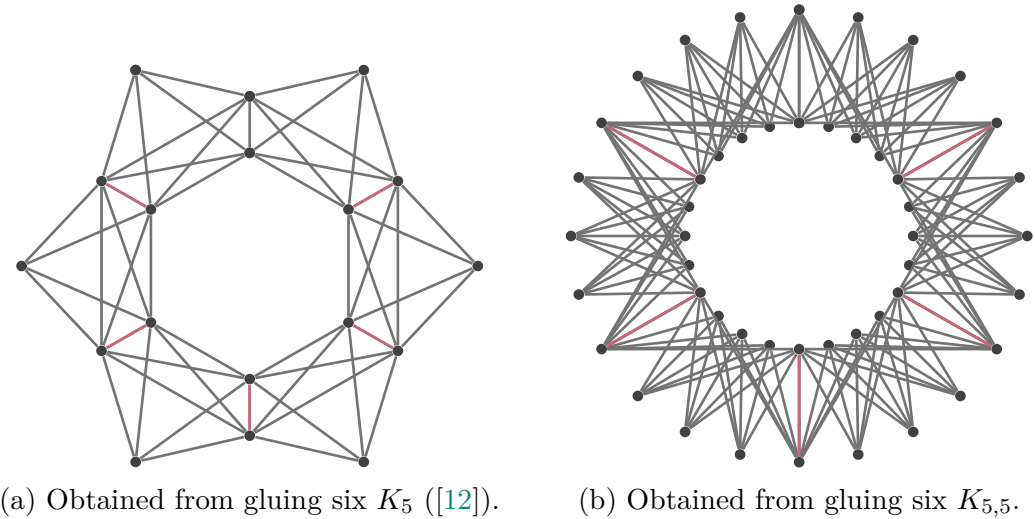
\begin{figure}[ht]
    \centering
    \begin{subfigure}[b]{0.45\textwidth}
        \centering
        \begin{tikzpicture}[scale=0.75,rotate=90]
            \foreach \w [count=\i from 0] in {0,60,...,360}
            {
                \node[vertex] (a\i) at (\w:2) {};
                \node[vertex] (b\i) at (\w:3) {};
                \node[vertex] (c\i) at (\w+30:4) {};
            }
            \foreach \i [evaluate=\i as \ip using {int(mod(int(\i+1),6))}] in {0,1,2,3,4,5}
            {
                \draw[edget] (a\i)edge(b\i);
                \draw[edget] (a\i)edge(c\i);
                \draw[edget] (a\i)edge(b\ip);
                \draw[edget] (a\i)edge(a\ip);
                \draw[edget] (a\ip)edge(b\i);
                \draw[edget] (a\ip)edge(c\i);
                \draw[edget] (b\i)edge(b\ip);
                \draw[edget] (b\i)edge(c\i);
                \draw[edget] (b\ip)edge(c\i);
            }
            \draw[edget,colR] (a1)edge(b1) (a2)edge(b2) (a3)edge(b3) (a4)edge(b4) (a5)edge(b5);
        \end{tikzpicture}
        \caption{Obtained from gluing six $K_5$ (\cite{JordanKiralyTanigawa}).}
        \label{fig:H3:K5cyc}
    \end{subfigure}
    \begin{subfigure}[b]{0.45\textwidth}
        \centering
        \begin{tikzpicture}[scale=0.75,rotate=90]
            \foreach \w [count=\i] in {0,15,...,360}
            {
                \node[vertex] (a\i) at (\w:2) {};
                \node[vertex] (b\i) at (\w:4) {};
            }
            \foreach \d in {0,1,2,3,4}
            {
                \foreach \k [evaluate=\k as \kk using int(\k+\d)] in {1,5,...,21}
                {
                    \foreach \i [count=\ii from \kk-\d] in {0,1,2,3,4}
                    {
                        \draw[edget] (a\kk)edge(b\ii);
                    }
                }
            }
            \draw[edget,colR] (a5)edge(b5) (a9)edge(b9) (a13)edge(b13) (a17)edge(b17) (a21)edge(b21);
        \end{tikzpicture}
        \caption{Obtained from gluing six $K_{5,5}$.}
        \label{fig:H3:K55cyc}
    \end{subfigure}
    \caption{Two $H_3$-graphs.}
    \label{fig:H3}
\end{figure}

\section{Conclusion}
While the computations were handy to show that there are no other small counterexamples to Hendrickson's conjecture other than the known ones,
they are insufficient for a full classification. Nevertheless, we think that computational experiments might reveal new counterexamples.
For this, however either some more theory or improved software/hardware is needed.

\section{Acknowledgments}
This work was partially supported by the Austrian Science Fund (FWF): P31888.

\end{document}